\newtheorem{theorem}{Theorem}[section]
\newtheorem*{theorem*}{Theorem}
\newtheorem{corollary}[theorem]{Corollary}
\newtheorem*{corollary*}{Corollary}
\newtheorem{proposition}[theorem]{Proposition}
\newtheorem*{proposition*}{Proposition}
\newtheorem{lemma}[theorem]{Lemma}
\newtheorem*{lemma*}{Lemma}
\newtheorem*{conjecture*}{Conjecture}
\newtheorem*{definition*}{Definition}
\newtheorem{remark}[theorem]{Remark}
\newtheorem*{remark*}{Remark}
\newtheorem{example}[theorem]{Example}
\newtheorem*{example*}{Example}
\newcommand{\ZZ}{\mathbb{Z}}
\newcommand{\QQ}{\mathbb{Q}}
\newcommand{\RR}{\mathbb{R}}
\newcommand{\CC}{\mathbb{C}}
\DeclareMathOperator{\reg}{reg}
\DeclareMathOperator{\disc}{disc}
\begin{document} 

\title[quartic Galois CM-fields with the same regulator]{An infinite family of pairs of \\ distinct quartic Galois CM-fields with \\ the same discriminant and regulator}
\date{}

\author[Y.~Iizuka]{Yoshichika Iizuka}
\address{Department of Mathematics, Gakushuin University, Mejiro, Toshima-ku, Tokyo 171-8588, Japan}
\email{iizuka@math.gakushuin.ac.jp}

\author[Y.~Konomi]{Yutaka Konomi}
\address{Department of Mathematics, Meijo University, Tempaku-ku, Nagoya 468-8502, Japan}
\email{konomi@meijo-u.ac.jp}

\begin{abstract}
We construct an infinite family of pairs of distinct imaginary biquadratic fields and pairs of distinct imaginary cyclic quartic fields with the same discriminant and regulator. 
We also construct an infinite family of imaginary biquadratic fields and imaginary cyclic quartic fields with the same regulator. 
Moreover, we give examples of a pair of distinct imaginary biquadratic fields and a pair of distinct imaginary cyclic quartic fields with the same discriminant, regulator and class number. 
\end{abstract}

\subjclass[2020]{Primary: 11R16; Secondary: 11R27, 11R29, 11R32}

\keywords{quartic fields, discriminants, regulators, Galois theory}

\maketitle

\section{Introduction}

A number field is a subfield of $\mathbb{C}$ whose degree over $\mathbb{Q}$ is finite. 
Finding sufficient conditions for any two given number fields to be isomorphic or coincident is a fundamental problem. 
A famous result of such a problem is the Neukirch-Uchida theorem, which states that two number fields are isomorphic if their absolute Galois groups are isomorphic as topological groups. 
Neukirch \cite{Neukirch1969} proved the case when two number fields are Galois extensions over $\QQ$, and then Uchida \cite{Uchida1976} proved the general case.

For two number fields $K$ and $L$, to say that $K$ and $L$ are algebraically equivalent means that their Dedekind zeta functions coincide, which is equivalent to each prime $p\in\ZZ$ having the same splitting type in $K$ and $L$ (for the definition of splitting type, see Perlis \cite{Perlis1977}, \S1). 
The proposition that two algebraically equivalent number fields are isomorphic holds if they are Galois extensions over $\QQ$, but there is a counterexample in general (see Gassmann \cite{Gassmann1926}, Komatsu \cite{Komatsu1978}, Perlis \cite{Perlis1977}, \cite{Perlis1978}). 

We investigate the problem described at the beginning from the viewpoint of whether two number fields coincide if one or more of their invariants coincide. 
Specifically, we focus on the discriminant and the regulator because they are invariants of a number field appearing in the class number formula. 

Throughout this paper, for a number field $K$, we denote by $\disc(K)$, $E(K)$, $\reg(K)$, $h(K)$ and $W(K)$ the discriminant, unit group, regulator, class number and the group of roots of unity contained in $K$, respectively. 

We recall the class number formula, which relates to the residue of the Dedekind zeta function of a number field at $s=1$:

\begin{theorem}[Class Number Formula] \label{th:001_00010}
The Dedekind zeta function $\zeta_{K}(s)$ of a number field $K$ converges absolutely for $\mathrm{Re}(s) > 1$ and extends to a meromorphic function on the entire complex plane $\CC$, possessing only one simple pole at $s=1$ with residue 
\[
  \lim_{s \to 1} (s - 1) \zeta_{K}(s) = \frac{2^{r_{1}} (2\pi)^{r_{2}} h(K) \reg(K)}{\lvert W(K)\rvert \sqrt{\lvert \disc(K)\rvert}}, 
\]
where $r_{1}$ is the number of real embeddings and $2r_{2}$ is the number of complex embeddings of $K$. 
\end{theorem}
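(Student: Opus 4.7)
The plan is to follow the classical Dirichlet--Dedekind lattice-point argument. First, I would establish absolute convergence of $\zeta_K(s) = \sum_{\mathfrak{a}} N(\mathfrak{a})^{-s}$ for $\mathrm{Re}(s) > 1$ via the Euler product $\prod_{\mathfrak{p}}(1 - N(\mathfrak{p})^{-s})^{-1}$, which is dominated by $\zeta(\mathrm{Re}(s))^n$ since each rational prime contributes at most $n = [K : \QQ]$ Euler factors. Full meromorphic continuation to $\CC$ is a separate matter and would be imported from Hecke's theta-function method (or Tate's thesis); the substantive content of the class number formula is the value of the residue at $s = 1$.

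To compute that residue I would decompose $\zeta_K(s) = \sum_{\mathfrak{C} \in \mathrm{Cl}(K)} \zeta_K(s,\mathfrak{C})$ along ideal classes and show that every partial sum has the same residue, so the total residue is $h(K)$ times the common value. Fixing a class $\mathfrak{C}$ and an integral ideal $\mathfrak{b} \in \mathfrak{C}^{-1}$, the map $\alpha \mapsto \alpha\mathfrak{b}^{-1}$ identifies ideals $\mathfrak{a} \in \mathfrak{C}$ of norm at most $X$ with nonzero principal ideals $(\alpha) \subset \mathfrak{b}$ satisfying $|N(\alpha)| \le N(\mathfrak{b})X$. Under the Minkowski embedding $K \hookrightarrow K_\RR \cong \RR^{r_1} \times \CC^{r_2}$, such principal ideals correspond bijectively to $E(K)$-orbits in $\mathfrak{b} \setminus \{0\}$, so counting them becomes a lattice-point count in a fundamental domain for the multiplicative action of $E(K)/W(K)$, with a final division by $|W(K)|$ for torsion units.

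The heart of the proof is constructing this fundamental domain via Dirichlet's unit theorem and computing its volume. A fundamental system of units $\varepsilon_1,\dots,\varepsilon_{r_1+r_2-1}$ maps under the logarithm embedding to a parallelotope of covolume $\reg(K)$ in the trace-zero hyperplane; lifting this together with the positive real scalars in the norm direction and the angular sign/phase coordinates on the real and complex places produces a cone $F \subset K_\RR$. A standard lattice-point asymptotic gives
\[
 \#\bigl(\mathfrak{b} \cap F \cap \{|N(x)| \le T\}\bigr) \sim \frac{\mathrm{vol}(F \cap \{|N(x)| \le 1\})}{\mathrm{covol}(\mathfrak{b})}\,T,
\]
and a careful change of variables separating radial, angular, and logarithmic coordinates shows that the numerator unpacks as $2^{r_1}(2\pi)^{r_2}\reg(K)$, while the denominator is proportional to $\sqrt{|\disc(K)|}\,N(\mathfrak{b})$. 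The factor $N(\mathfrak{b})$ cancels once $T = N(\mathfrak{b})X$ is substituted; dividing by $|W(K)|$ and applying a Tauberian or Abel-summation argument converts the asymptotic into the claimed residue of each $\zeta_K(s,\mathfrak{C})$, and summing over $\mathfrak{C}$ yields the theorem.

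The main obstacle is unquestionably the volume computation: choosing coordinates on $K_\RR$ adapted simultaneously to the multiplicative unit action and to the additive ideal lattice, verifying that the cone $F$ is a genuine fundamental domain modulo $W(K)$, and tracking every Jacobian factor --- the $2^{r_1}$ from sign choices on real embeddings, the $(2\pi)^{r_2}$ from circles on complex embeddings, the $\reg(K)$ from the logarithm map, and the $\sqrt{|\disc(K)|}$ from the Minkowski covolume --- without arithmetic slippage. Everything else in the argument is either formal Dirichlet-series manipulation or a routine lattice-point estimate.
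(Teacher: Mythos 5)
The paper does not prove this theorem: it is stated as classical background (the analytic class number formula), so there is no in-paper argument to measure you against. Your outline is the standard Dirichlet--Dedekind proof, and as an outline it is correct: the Euler-product bound for absolute convergence, the decomposition over ideal classes, the reduction of $\zeta_K(s,\mathfrak{C})$ to counting $E(K)$-orbits in $\mathfrak{b}\setminus\{0\}$ with the final division by $\lvert W(K)\rvert$, and the provenance of each constant --- $\reg(K)$ from the covolume of the logarithmic unit lattice, $\sqrt{\lvert\disc(K)\rvert}$ (times $N(\mathfrak{b})$, up to the $2^{-r_2}$ normalization) from the Minkowski covolume of $\mathfrak{b}$, and $2^{r_1}(2\pi)^{r_2}$ from the sign and phase coordinates --- are all correctly placed. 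That said, what you have written is a plan rather than a proof: the step you yourself identify as the main obstacle, namely verifying that the cone $F$ is a genuine fundamental domain for the action of $E(K)/W(K)$ and computing $\mathrm{vol}(F\cap\{\lvert N(x)\rvert\le 1\})$, is where essentially all the content of the theorem lives, and it is not carried out. Two smaller points: the final passage from the counting asymptotic to the residue should be Abel (partial) summation applied to the count with its $O(T^{1-1/n})$ error term, which in fact gives continuation to $\mathrm{Re}(s)>1-1/n$; a Tauberian theorem argues in the opposite direction and is not the right tool. And the full meromorphic continuation to all of $\CC$ genuinely requires Hecke's theta-function method or Tate's thesis, so citing it, as you do, is appropriate --- the lattice-point argument alone only reaches a neighborhood of $s=1$.
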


Quadratic fields coincide if their discriminants coincide; real quadratic fields coincide if their regulators coincide. 
It follows directly from the definition of discriminant that all conjugate fields of a number field have the same discriminant. 
However, neither the discriminant nor the regulator is sufficient to identify each number field of degree greater than $2$. 
Of course, it is a trivial counterexample that the regulators of imaginary quadratic fields are all $1$. 
In this paper, we give non-trivial counterexamples for imaginary biquadratic fields and for imaginary cyclic quartic fields. 
The main results are the following:

\begin{theorem} \label{th:001_00020}
For any positive real number $M$, there is a real number $R$ greater than $M$ such that the following conditions hold:
\begin{enumerate}[label=\rm(\roman*)]
\item There is an infinite family of imaginary biquadratic fields whose regulator is $R$. 
\item There is an infinite family of pairs of distinct imaginary biquadratic fields $K_{1}$ and $K_{2}$ such that $\disc(K_{1})=\disc(K_{2})$ and $\reg(K_{1})=\reg(K_{2})=R$. 
\end{enumerate}
\end{theorem}

\begin{theorem} \label{th:001_00030}
For any positive real number $M$, there is a real number $R$ greater than $M$ such that the following conditions hold:
\begin{enumerate}[label=\rm(\roman*)]
\item There is an infinite family of imaginary cyclic quartic fields whose regulator is $R$. 
\item There is an infinite family of pairs of distinct imaginary cyclic quartic fields $K_{1}$ and $K_{2}$ such that $\disc(K_{1})=\disc(K_{2})$ and $\reg(K_{1})=\reg(K_{2})=R$. 
\end{enumerate}
\end{theorem}

\begin{remark} \label{th:001_00040}
By the Hermite-Minkowski theorem, for any integer $D$, there are only finitely many number fields with discriminant $D$; 
hence the discriminant analogues of Theorems~\ref{th:001_00020}~(i) and~\ref{th:001_00030}~(i) cannot hold: there is no infinite family of pairwise distinct imaginary biquadratic fields or imaginary cyclic quartic fields having the same discriminant, and therefore in Theorems~\ref{th:001_00020}~(ii) and~\ref{th:001_00030}~(ii), the common discriminants of each pair are necessarily unbounded. 
\end{remark}

The class number, the order of the ideal class group, is one of the most important invariants of a number field. 
For a number field $K$, it is well-known that the class number of $K$ is $1$ if and only if the uniqueness of the factorization of an integer of $K$ into primes holds, and one can consider the class number as a measure of the failure to achieve the uniqueness. 
Number fields with the same class number are not necessarily isomorphic. 
However, there are some studies about the characterization of number fields that have a given class number (see Carlitz \cite{Carlitz1960}, Rush \cite{Rush1983}, Krause \cite{Krause1984}, Chapman and Smith \cite{ChapmanSmith1990}).

In Section \ref{sec:007_00010}, we give examples of a pair of distinct imaginary biquadratic fields and a pair of distinct imaginary cyclic quartic fields with the same discriminant, regulator and class number. 
This shows, in particular, that it is not possible to identify each number field in general by the residue of the Dedekind zeta function at $s=1$.

\section{Regulators of CM-fields}

Let $K$ be a CM-field and $K^{+}$ its maximal real subfield: $K^{+}=K\cap\RR$. 
Let $n$ denote the degree of the extension $K$ over $\QQ$ and $r$ the rank of the free part of $E(K)$. 
Since $K$ is totally imaginary, we have
\[
  r = \frac{n}{2} - 1. 
\]
We denote by $Q$ the index of $W(K)E(K^{+})$ in $E(K)$. 
By Washington \cite{Washington1996}, Proposition 4.16, for the ratio of the regulator of a CM-field and that of its maximal real subfield, we have
\[
  Q\cdot\reg(K) = 2^{r}\cdot\reg(K^{+}). 
\]
Moreover, by Washington \cite{Washington1996}, Theorem 4.12, we have $Q=1$ or $2$.
According to Remak \cite{Remak1954}, \S3, given a fixed maximal real subfield $K^{+}$, there are only finitely many CM-fields $K$ with $Q=2$.

The following provides a sufficient condition for $Q=1$:

\begin{theorem}[Nakamula \cite{Nakamula1996}, \S2.4, Lemma 1] \label{th:002_00010}
If $\disc(K)/\disc(K^{+})^2$ does not divide $2^{n}$, then $Q=1$.
\end{theorem}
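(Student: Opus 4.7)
The plan is to prove the contrapositive: assuming $Q = 2$, I will show that $\disc(K)/\disc(K^+)^2$ divides $2^n$. Let $\sigma$ denote the nontrivial element of $\mathrm{Gal}(K/K^+)$, write $\bar\epsilon = \sigma(\epsilon)$, and set $m = |W(K)|$; note $m$ is even because $-1 \in K$.

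The first ingredient is the standard characterization of $Q$ via the homomorphism $\phi \colon E(K) \to W(K)$, $\phi(\epsilon) = \epsilon/\bar\epsilon$, which is well-defined because $\epsilon/\bar\epsilon$ is an algebraic integer of absolute value $1$ at every archimedean place. One checks $\ker\phi = E(K^+)$ and $\phi(W(K)) = W(K)^2$ (a subgroup of index $2$ in $W(K)$, since $\bar\zeta = \zeta^{-1}$). Combined with $W(K) \cap E(K^+) = \{\pm 1\}$, this gives the index identity
\[
  Q = \frac{[E(K):E(K^+)]}{[W(K)E(K^+):E(K^+)]} = \frac{2\,|\phi(E(K))|}{m},
\]
so $Q = 2$ forces $\phi$ to be surjective. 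Pick $\epsilon \in E(K)$ with $\phi(\epsilon) = \zeta$ a primitive $m$th root of unity.

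Since $\zeta \ne 1$, $\epsilon \notin K^+$, so $K = K^+(\epsilon)$ with minimal polynomial $X^2 - (\epsilon + \bar\epsilon)X + \epsilon\bar\epsilon$ and polynomial discriminant $(\epsilon - \bar\epsilon)^2$. Substituting $\bar\epsilon = \zeta^{-1}\epsilon$ yields
\[
  (\epsilon - \bar\epsilon)^2 = -\epsilon\bar\epsilon\,(1-\zeta)(1-\zeta^{-1}),
\]
and since $\epsilon\bar\epsilon \in \mathcal{O}_{K^+}$ is a unit, the principal ideals $\big((\epsilon-\bar\epsilon)^2\big)$ and $\big((1-\zeta)(1-\zeta^{-1})\big)$ coincide in $\mathcal{O}_{K^+}$. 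From $\mathcal{O}_{K^+}[\epsilon] \subseteq \mathcal{O}_K$, the relative discriminant $\mathfrak{d}_{K/K^+}$ divides this ideal, and taking $N_{K^+/\QQ}$ shows that $\disc(K)/\disc(K^+)^2$ divides $N_{K^+/\QQ}\big((1-\zeta)(1-\zeta^{-1})\big)$.

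A case analysis finishes the proof. If $m = 2$, then $\zeta = -1$ and the bound equals $4^{n/2} = 2^n$. If $m > 2$, then $(1-\zeta)(1-\zeta^{-1}) \in \QQ(\zeta)^+$, and transitivity of norms through $\QQ(\zeta)^+ \subseteq \QQ(\zeta)$ yields $N_{K^+/\QQ}\big((1-\zeta)(1-\zeta^{-1})\big) = \Phi_m(1)^{[K^+ : \QQ(\zeta)^+]}$. The classical evaluation gives $\Phi_m(1) = p$ when $m = p^k$ is a prime power and $\Phi_m(1) = 1$ otherwise; since $m$ is even and $m > 2$, the only prime-power case is $m = 2^k$ with $k \ge 2$, where $[K^+ : \QQ(\zeta)^+] = n/2^{k-1} \le n$, so the bound $2^{n/2^{k-1}}$ divides $2^n$. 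The main obstacle is the initial step—translating the algebraic condition $Q = 2$ into the existence of a unit $\epsilon$ with $\epsilon/\bar\epsilon$ a generator of $W(K)$—after which the argument is a clean discriminant-and-norm calculation, with the only subtlety being that odd prime-power values of $m$ are ruled out by $-1 \in W(K)$.
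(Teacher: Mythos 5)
The paper does not actually prove this statement---it is quoted with only the citation to Nakamura's Lemma 1, so there is no in-text argument to compare against. Your proof is correct and self-contained, and it follows what is essentially the standard route to results of this type (and, as far as I can tell, the spirit of Nakamura's original argument). The key reduction---that $Q=2$ forces the map $\phi(\epsilon)=\epsilon/\bar\epsilon$ to surject onto $W(K)$---is exactly the content of Washington's Theorem 4.12/Proposition 4.16 circle of ideas, and your index computation $Q=2\lvert\phi(E(K))\rvert/m$ is right. The discriminant step is also sound: with $\bar\epsilon=\zeta^{-1}\epsilon$ one has $K=K^{+}(\epsilon)$, the relative different divides $(\epsilon-\bar\epsilon)=(\,(1-\zeta^{-1})\,)$ up to units, hence $\mathfrak{d}_{K/K^{+}}\mid\bigl((1-\zeta)(1-\zeta^{-1})\bigr)$, and the tower formula converts this into the bound on $\disc(K)/\disc(K^{+})^{2}$ (up to sign, which is all that divisibility requires). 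The final evaluation via $\Phi_{m}(1)$, with the observation that $m$ is even so the only prime-power case is $m=2^{k}$, closes the argument cleanly. One tiny presentational quibble: you should say explicitly that the divisibility is of absolute values, since $\disc(K)$ is negative when $n\equiv 2\pmod 4$; this does not affect correctness.
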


\section{Regulators of imaginary biquadratic fields} \label{sec:003_00010}

For a square-free integer $m$, let $F_{m} = \QQ(\sqrt{m})$. 
Let $m_{1}$ and $m_{2}$ be square-free positive integers greater than $1$ such that $\gcd(m_{1}, m_{2})=1$ and $m_{1}\equiv m_{2}\equiv 1\pmod{4}$.

Since $2m_{2}\equiv 2\pmod{4}$, we have $\disc(F_{2m_{2}})=8m_{2}$. 

Since $-m_{1}\equiv 3\pmod{4}$, we have $\disc(F_{-m_{1}})=-4m_{1}$. 

Since $-2m_{1}m_{2}\equiv 2\pmod{4}$, we have $\disc(F_{-2m_{1}m_{2}})=-8m_{1}m_{2}$. 

Since $-2m_{1}\equiv 2\pmod{4}$, we have $\disc(F_{-2m_{1}})=-8m_{1}$. 

Since $-m_{1}m_{2}\equiv 3\pmod{4}$, we have $\disc(F_{-m_{1}m_{2}})=-4m_{1}m_{2}$. 

Now, for distinct square-free integers $a$ and $b$, let $B_{a,b} = \QQ(\sqrt{a},\sqrt{b})$.
The three quadratic subfields of $B_{-m_{1},2m_{2}}$ are $F_{2m_{2}}$, $F_{-m_{1}}$ and $F_{-2m_{1}m_{2}}$. Then we observe that
\[
  \disc(B_{-m_{1},2m_{2}})=\disc(F_{2m_{2}})\cdot\disc(F_{-m_{1}})\cdot\disc(F_{-2m_{1}m_{2}})=2^8m_{1}^2m_{2}^2.
\]
The three quadratic subfields of $B_{-2m_{1},2m_{2}}$ are $F_{2m_{2}}$, $F_{-2m_{1}}$ and $F_{-m_{1}m_{2}}$. Then we observe that
\[
  \disc(B_{-2m_{1},2m_{2}})=\disc(F_{2m_{2}})\cdot\disc(F_{-2m_{1}})\cdot\disc(F_{-m_{1}m_{2}})=2^8m_{1}^2m_{2}^2.
\]
Therefore, 
\[
  \disc(B_{-m_{1},2m_{2}})=\disc(B_{-2m_{1},2m_{2}}). 
\]
On the other hand, it is clear that
\[
  B_{-m_{1},2m_{2}}\neq B_{-2m_{1},2m_{2}}. 
\]
Since $F_{2m_{2}}$ is the maximal real subfield of both $B_{-m_{1},2m_{2}}$ and $B_{-2m_{1},2m_{2}}$, we obtain
\[
  \frac{\disc(B_{-m_{1},2m_{2}})}{\disc(F_{2m_{2}})^{2}} = \frac{\disc(B_{-2m_{1},2m_{2}})}{\disc(F_{2m_{2}})^{2}} = 2^{2}m_{1}^{2}. 
\]
Since $2^{2}m_{1}^{2}$ does not divide $2^{4}$, by Theorem \ref{th:002_00010}, we obtain
\[
  \reg(B_{-m_{1},2m_{2}}) = \reg(B_{-2m_{1},2m_{2}}) = 2\cdot\reg(F_{2m_{2}}). 
\]

\section{Defining polynomials of cyclic quartic fields}

For rational numbers $s$ and $t$ that are non-zero, we consider the quartic polynomial of the form
\[
  f(s, t, X) = X^{4}-2s(t^{2}+1)X^{2}+s^{2}t^{2}(t^{2}+1). 
\]
The $4$ roots of $f(s, t, X)$ in $\CC$ are
\[
  \pm\sqrt{s(t^{2}+1)\pm s\sqrt{t^{2}+1}}. 
\]
We denote by $K_{s,t}$ the splitting field of $f(s, t, X)$ over $\QQ$. Put
\begin{align*}
  \theta  &= \sqrt{s(t^{2}+1)+s\sqrt{t^{2}+1}}, \\
  \theta' &= \sqrt{s(t^{2}+1)-s\sqrt{t^{2}+1}}. 
\end{align*}
Since
\begin{align*}
  &\sqrt{t^{2} + 1} = \frac{\theta^{2}}{s} - (t^{2} + 1), \\
  &\theta^{2}\theta'^{2} = s^{2}t^{2}(t^{2}+1), 
\end{align*}
we have
\[
  \theta' = \pm\frac{st\sqrt{t^{2}+1}}{\theta}\in\QQ(\theta). 
\]
Therefore, all roots of $f(s,t,X)$ lie in $\QQ(\theta)$, and we obtain
\[
  K_{s,t} = \QQ(\theta) = \QQ\left(\!\sqrt{s(t^{2}+1)+s\sqrt{t^{2}+1}}\,\right). 
\]
Moreover, it is totally real when $s>0$ and totally imaginary when $s<0$. 

For defining polynomials of cyclic quartic fields, the following result is known: 

\begin{theorem} \label{th:004_00010}
Let $f(X)=X^{4}+aX^{2}+b\in \QQ[X]$ and suppose that $f(X)$ is irreducible over $\QQ$. 
If $b\not\in(\QQ^{\times})^{2}$ and $b(a^{2}-4b)\in(\QQ^{\times})^{2}$, 
the field generated over $\QQ$ by a root of $f(X)$ is a cyclic quartic field.
\end{theorem}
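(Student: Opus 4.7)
\medskip

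\noindent\textbf{Proof proposal.}
Let $\alpha$ be a root of $f(X)$ in $\CC$ and set $K=\QQ(\alpha)$. Since $f$ is irreducible, $[K:\QQ]=4$. Writing $\alpha^2=\tfrac{-a+\sqrt{a^2-4b}}{2}$ (for a suitable choice of sign of the square root) and letting $\beta$ be a root of $f$ with $\beta^2=\tfrac{-a-\sqrt{a^2-4b}}{2}$, the four roots of $f$ are $\pm\alpha,\pm\beta$ and one checks directly that $(\alpha\beta)^2 = b$, so $\alpha\beta=\pm\sqrt{b}$. My plan is to prove the theorem in two steps: first, show that $K$ already contains every root of $f$, so that $K/\QQ$ is Galois; second, exhibit an element of order $4$ in $\mathrm{Gal}(K/\QQ)$, which forces the group to be cyclic since it has order $4$.

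For the first step, note that $\sqrt{a^2-4b}=2\alpha^2+a\in K$, so $K\supseteq \QQ(\sqrt{a^2-4b})$. The irreducibility of $f$ forces $a^2-4b\neq 0$ and $a^2-4b\notin(\QQ^\times)^2$ (otherwise $\alpha^2\in\QQ$ and, since $b\notin(\QQ^\times)^2$, $\alpha$ would generate a quadratic extension, contradicting $\deg f=4$). By hypothesis $b(a^2-4b)=c^2$ for some $c\in\QQ^\times$, so
\[
  \sqrt{b}=\frac{c}{\sqrt{a^2-4b}}\in\QQ(\sqrt{a^2-4b})\subseteq K.
\]
Consequently $\beta=\pm\sqrt{b}/\alpha\in K$, all four roots lie in $K$, and $K/\QQ$ is Galois of degree $4$.

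For the second step, since $f$ is irreducible, $\mathrm{Gal}(K/\QQ)$ acts transitively on the roots; choose $\sigma$ with $\sigma(\alpha)=\beta$. Then $\sigma(\alpha^2)=\beta^2$, so $\sigma$ sends $\sqrt{a^2-4b}=2\alpha^2+a$ to $2\beta^2+a=-\sqrt{a^2-4b}$. Because $\sqrt{b}$ is a nonzero rational multiple of $\sqrt{a^2-4b}$ (by the identification above), this implies $\sigma(\sqrt{b})=-\sqrt{b}$, hence $\sigma(\alpha\beta)=-\alpha\beta$, which combined with $\sigma(\alpha)=\beta$ gives $\sigma(\beta)=-\alpha$. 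Therefore $\sigma^2(\alpha)=-\alpha\neq\alpha$, so $\sigma$ has order strictly greater than $2$, hence order $4$, and $\mathrm{Gal}(K/\QQ)\cong\ZZ/4\ZZ$.

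The argument is essentially routine once the right observation is made, so there is no serious obstacle; the only conceptual point is the following. Generically, the two quadratic subfields $\QQ(\sqrt{a^2-4b})$ and $\QQ(\sqrt{b})$ inside the splitting field of $f$ would be distinct, producing a Klein-four Galois group. The hypothesis $b(a^2-4b)\in(\QQ^\times)^2$ is precisely what collapses these two subfields into one, which (i) puts $\sqrt{b}$ inside $K$ so that $K$ is already the splitting field, and (ii) leaves room for only one proper intermediate field, ruling out the $V_4$ case and forcing cyclicity.
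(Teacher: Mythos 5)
Your proof is correct and complete. Note, however, that the paper does not actually prove this statement: it simply cites Jensen--Ledet--Yui, Corollary 2.2.4, so your self-contained argument is a genuine addition rather than a rederivation of the paper's reasoning. What you give is the standard Galois-theoretic analysis of an irreducible biquadratic $X^4+aX^2+b$: the two relevant quadratic resolvents are $\sqrt{a^2-4b}=2\alpha^2+a$ and $\sqrt{b}=\pm\alpha\beta$, and generically they generate the two independent quadratic subfields of a $V_4$- or $D_4$-splitting field; the hypothesis $b(a^2-4b)\in(\QQ^\times)^2$ identifies them, which both forces $\QQ(\alpha)$ to be the full splitting field and, via your explicit $\sigma$ with $\sigma(\alpha)=\beta$, $\sigma(\beta)=-\alpha$, produces an order-$4$ automorphism. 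All the small verifications are in place: $a^2-4b\ne 0$ and $a^2-4b\notin(\QQ^\times)^2$ follow from irreducibility, $b\ne 0$ is implicit in $b(a^2-4b)\in(\QQ^\times)^2$, and the sign bookkeeping in $\sigma(\sqrt{b})=-\sqrt{b}$ is handled correctly since $\sqrt{b}$ is a fixed nonzero rational multiple of $\sqrt{a^2-4b}$ inside $K$. The only stylistic remark is that your parenthetical ``since $b\notin(\QQ^\times)^2$'' in the argument that $a^2-4b\notin(\QQ^\times)^2$ is unnecessary: if $\alpha^2\in\QQ$ then $[\QQ(\alpha):\QQ]\le 2$ already contradicts $\deg f=4$ without any appeal to $b$.
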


\begin{proof}
For a proof, see Jensen, Ledet and Yui \cite{JensenLedetYui2002}, Corollary 2.2.4.
\end{proof}

\begin{lemma} \label{th:004_00020}
Let $s$ and $t$ be non-zero rational numbers with $t^{2}+1\not\in\QQ^{2}$. 
Then $f(s, t, X)$ is irreducible over $\QQ$ and $K_{s,t}$ is a cyclic quartic field. 
Moreover, the real quadratic field $\QQ(\sqrt{t^{2}+1})$ is a subfield of $K_{s,t}$. 
\end{lemma}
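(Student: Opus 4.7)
The plan is to reduce everything to Theorem \ref{th:004_00010}, so the real work is (a) verifying the two algebraic conditions on the coefficients of $f(s,t,X) = X^{4} + aX^{2} + b$ with $a = -2s(t^{2}+1)$ and $b = s^{2}t^{2}(t^{2}+1)$, and (b) proving that $f(s,t,X)$ is irreducible over $\QQ$ (Theorem \ref{th:004_00010} has irreducibility as a hypothesis).

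For the algebraic conditions, I would first observe that $b = s^{2}t^{2}(t^{2}+1)$ differs from $t^{2}+1$ by the non-zero rational square $s^{2}t^{2}$, so the hypothesis $t^{2}+1 \notin \QQ^{2}$ gives $b \notin (\QQ^{\times})^{2}$ at once. A direct computation yields
\[
  a^{2} - 4b = 4s^{2}(t^{2}+1)^{2} - 4s^{2}t^{2}(t^{2}+1) = 4s^{2}(t^{2}+1),
\]
so $b(a^{2} - 4b) = 4s^{4}t^{2}(t^{2}+1)^{2} = \bigl(2s^{2}t(t^{2}+1)\bigr)^{2}$ is a rational square. This handles the Galois-theoretic input.

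The step I expect to require the most care is irreducibility, and I would handle it by ruling out each type of factorization of the even polynomial $f(s,t,X)$ over $\QQ$. If $\beta \in \QQ$ were a root, then $\beta^{2} = s(t^{2}+1) \pm s\sqrt{t^{2}+1}$ would be rational, forcing $\sqrt{t^{2}+1} \in \QQ$, contradiction. A factorization into two monic quadratics over $\QQ$ must take the shape $(X^{2}+cX+d)(X^{2}-cX+e)$; equating coefficients forces $c(e-d)=0$. The case $c=0$ makes $d,e$ the roots of $Y^{2} - aY + b$, whose discriminant $a^{2}-4b = 4s^{2}(t^{2}+1)$ is not a rational square, while the case $d=e$ forces $d^{2}=b$, i.e.\ $d = \pm st\sqrt{t^{2}+1}$, again not rational. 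Thus $f(s,t,X)$ is irreducible.

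Combining the two parts, Theorem \ref{th:004_00010} applies and gives that the field generated by any root $\alpha$ of $f(s,t,X)$ is a cyclic quartic field; being Galois, it coincides with the splitting field $K_{s,t}$, and $K_{s,t} = \QQ(\alpha)$ has degree $4$ over $\QQ$. Finally, from $\alpha^{2} = s(t^{2}+1) \pm s\sqrt{t^{2}+1}$ I get
\[
  \sqrt{t^{2}+1} = \pm\frac{\alpha^{2} - s(t^{2}+1)}{s} \in \QQ(\alpha) = K_{s,t},
\]
so $\QQ(\sqrt{t^{2}+1}) \subseteq K_{s,t}$; since $t \in \QQ^{\times}$ gives $t^{2}+1 > 0$ and the hypothesis forbids $t^{2}+1$ from being a rational square, $\QQ(\sqrt{t^{2}+1})$ is indeed a real quadratic subfield of $K_{s,t}$.
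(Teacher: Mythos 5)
Your proposal is correct and follows the same route as the paper: verify the hypotheses of Theorem \ref{th:004_00010} for $f(s,t,X)$ and then read off the quadratic subfield from $\theta^{2}=s(t^{2}+1)+s\sqrt{t^{2}+1}$. In fact you supply details the paper leaves implicit --- the explicit checks that $b\not\in(\QQ^{\times})^{2}$ and $b(a^{2}-4b)=\bigl(2s^{2}t(t^{2}+1)\bigr)^{2}$, and the case analysis ruling out linear and quadratic factors --- where the paper simply asserts irreducibility directly from $t^{2}+1\not\in\QQ^{2}$.
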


\begin{proof}
To show that $K_{s,t}$ is a cyclic quartic field, we verify the conditions of Theorem \ref{th:004_00010} for $f(s, t, X)$.
Put
\[
  f(s,t,X)=X^{4}+aX^{2}+b
\]
with
\[
  a=-2s(t^{2}+1),\quad b=s^{2}t^{2}(t^{2}+1).
\]
Since $b=(st)^{2}(t^{2}+1)$ and $t^{2}+1\not\in\QQ^{2}$, we have $b\not\in(\QQ^{\times})^{2}$.
Moreover,
\[
  a^{2}-4b
  =4s^{2}(t^{2}+1)^{2}-4s^{2}t^{2}(t^{2}+1)
  =4s^{2}(t^{2}+1),
\]
and hence $a^{2}-4b\not\in(\QQ^{\times})^{2}$.

We prove that $f(s,t,X)$ is irreducible over $\QQ$.
Since $f(s,t,X)$ is an even polynomial, any factorization over $\QQ$ into two quadratic factors must be of one of the following forms:
\begin{align*}
  & (X^{2}+u)(X^{2}+v),\quad u,\,v\in\QQ, \\
  & (X^{2}+pX+q)(X^{2}-pX+q), \quad p,\,q\in\QQ. 
\end{align*}
In the first case, we have $a=u+v$ and $b=uv$, and hence $a^{2}-4b=(u-v)^{2}\in(\QQ^{\times})^{2}$, which contradicts $a^{2}-4b\not\in(\QQ^{\times})^{2}$. 
In the second case, the constant term is $q^{2}$, and hence $b=q^{2}\in(\QQ^{\times})^{2}$, which contradicts $b\not\in(\QQ^{\times})^{2}$.
Therefore, $f(s,t,X)$ is irreducible over $\QQ$.

Next, we compute
\begin{align*}
  b(a^{2}-4b) 
  &= s^{2}t^{2}(t^{2}+1)\cdot 4s^{2}(t^{2}+1) \\
  &= 4s^{4}t^{2}(t^{2}+1)^{2} \\
  &= \bigl(2s^{2}t(t^{2}+1)\bigr)^{2}\in(\QQ^{\times})^{2}.
\end{align*}
Thus $f(s,t,X)$ satisfies the assumption of Theorem \ref{th:004_00010}, and hence the field generated by a root of $f(s,t,X)$ is a cyclic quartic field.
In particular, if we put
\[
  \theta = \sqrt{s(t^{2}+1)+s\sqrt{t^{2}+1}},
\]
then $K_{s,t}=\QQ(\theta)$ (as shown above in this section), and therefore $K_{s,t}$ is a cyclic quartic field.

Finally, we have
\[
  \sqrt{t^{2}+1}=\frac{\theta^{2}}{s}-(t^{2}+1)\in \QQ(\theta)=K_{s,t},
\]
and hence $\QQ(\sqrt{t^{2}+1})\subseteq K_{s,t}$.
Since $t^{2}+1\not\in\QQ^{2}$, the field $\QQ(\sqrt{t^{2}+1})$ is a real quadratic field. 
\end{proof}

\begin{lemma} \label{th:004_00030}
For an integer $t$, if $t^{2}+1$ is a square, then $t=0$.
\end{lemma}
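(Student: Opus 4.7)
The plan is to reduce the equation $t^{2}+1 = k^{2}$ to a factorization in $\ZZ$ and exhaust the finitely many possibilities. Suppose $t^{2}+1 = k^{2}$ for some integer $k$; without loss of generality we may take $k \geq 0$. Then rewriting gives
\[
  (k-t)(k+t) = k^{2} - t^{2} = 1.
\]
Since $k-t$ and $k+t$ are integers whose product is $1$, each factor is either $1$ or $-1$, and they must have the same sign. Hence $k - t = k + t$, which forces $2t = 0$, i.e.\ $t = 0$.

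The key observation is simply that $1$ has only the trivial factorizations $1 \cdot 1$ and $(-1)\cdot(-1)$ in $\ZZ$; no real obstacle arises. One could equivalently argue that consecutive perfect squares differ by at least $2k+1 \geq 1$, with equality only at $k=0$, so $t^{2}$ and $t^{2}+1$ can both be squares only when $t = 0$.
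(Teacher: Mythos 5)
Your proof is correct and follows essentially the same route as the paper: factor $k^{2}-t^{2}=(k-t)(k+t)=1$, note both factors must be $1$ or both $-1$, and conclude $k-t=k+t$, hence $t=0$. The closing remark about consecutive squares is a pleasant alternative but adds nothing essential.
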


\begin{proof}
Suppose that $t^{2}+1$ is a square.
There is an integer $u$ such that $t^{2}+1 = u^{2}$. Then,
\[
  (u-t)(u+t) = u^{2}-t^{2} = 1.
\]
Since both $u-t$ and $u+t$ are integers, we obtain
\[
  (u-t, u+t) = (1, 1)\;\:\text{or}\;\:(-1, -1).
\]
In either case, we have $u-t = u+t$, which implies $t=0$. 
\end{proof}

\begin{proposition} \label{th:004_00040}
Let $s$ and $t$ be non-zero integers. Then $f(s, t, X)$ is irreducible over $\QQ$ and $K_{s,t}$ is a cyclic quartic field.
\end{proposition}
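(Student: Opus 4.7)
The plan is to deduce this proposition directly from the two preceding results, Lemma \ref{th:004_00020} and Lemma \ref{th:004_00030}, by bridging the gap between the integer hypothesis here and the rational hypothesis needed in Lemma \ref{th:004_00020}. Lemma \ref{th:004_00020} already gives both conclusions (irreducibility of $f(s,t,X)$ and that $K_{s,t}$ is cyclic quartic) under the hypothesis that $s$ and $t$ are non-zero rational numbers with $t^{2}+1\notin\QQ^{2}$. Since integers are rational, the only thing left to verify is that, for our non-zero integer $t$, the number $t^{2}+1$ is not a square in $\QQ$.

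First I would observe that $t^{2}+1$ is a positive integer. I would then invoke the standard fact that a positive integer is a square in $\QQ$ if and only if it is a square in $\ZZ$: indeed, writing $t^{2}+1=(p/q)^{2}$ with $\gcd(p,q)=1$ forces $q^{2}\mid p^{2}$, hence $q=\pm 1$, so $t^{2}+1$ would be a perfect square integer. Applying Lemma \ref{th:004_00030} (with the hypothesis $t\ne 0$), this case is ruled out, and therefore $t^{2}+1\notin\QQ^{2}$.

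With this verified, the hypotheses of Lemma \ref{th:004_00020} are satisfied for $(s,t)$, and the conclusions---that $f(s,t,X)$ is irreducible over $\QQ$ and that $K_{s,t}$ is a cyclic quartic field---follow immediately.

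There is essentially no obstacle here: the proposition is just the restriction of Lemma \ref{th:004_00020} to the case of integer parameters, and Lemma \ref{th:004_00030} is exactly the tool that ensures the side condition $t^{2}+1\notin\QQ^{2}$ holds automatically in this setting. The only small subtlety to spell out is the passage from ``not a square in $\ZZ$'' to ``not a square in $\QQ$,'' which is a one-line argument using coprimality.
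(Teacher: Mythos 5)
Your proposal is correct and follows essentially the same route as the paper, which also deduces the proposition by combining Lemma \ref{th:004_00020} with Lemma \ref{th:004_00030}. The only difference is that you explicitly justify the passage from ``not a square in $\ZZ$'' to ``not a square in $\QQ$,'' a detail the paper leaves implicit.
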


\begin{proof}
By Lemmas \ref{th:004_00020} and \ref{th:004_00030}, $f(s, t, X)$ is irreducible over $\QQ$, and hence $K_{s,t}$ is a cyclic quartic field.
\end{proof}

\begin{lemma} \label{th:004_00050}
Let $s$, $s'$ and $t$ be non-zero rational numbers satisfying $ss'\not\in\QQ^{2}$ and $t^2 + 1\not\in\QQ^{2}$. 
Then $K_{s,t}=K_{s',t}$ if and only if $t^{2}+1\in ss'\QQ^{2}$. 
\end{lemma}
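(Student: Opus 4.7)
The plan is to reduce the equality $K_{s,t}=K_{s',t}$ to a statement about squares in the common quadratic subfield $\QQ(\alpha)$, where $\alpha=\sqrt{t^{2}+1}$. First I would note that the assumption $t^{2}+1\notin\QQ^{2}$ (without which Lemma \ref{th:004_00020} does not apply and $K_{s,t},K_{s',t}$ are not genuine cyclic quartic fields) places $\QQ(\alpha)$ inside both. Using $\theta^{2}=s(t^{2}+1)+s\alpha=s\alpha(\alpha+1)$, I would present both fields as quadratic extensions of $\QQ(\alpha)$:
\[
  K_{s,t}=\QQ(\alpha)\bigl(\sqrt{s\alpha(\alpha+1)}\bigr),\qquad K_{s',t}=\QQ(\alpha)\bigl(\sqrt{s'\alpha(\alpha+1)}\bigr).
\]
Elementary Kummer theory then gives $K_{s,t}=K_{s',t}$ if and only if the ratio of the two radicands, namely $s/s'$, lies in $\QQ(\alpha)^{\times 2}$.

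Next I would identify the rational squares inside $\QQ(\alpha)$. Expanding $(a+b\alpha)^{2}=a^{2}+b^{2}(t^{2}+1)+2ab\alpha$ and demanding rationality forces $ab=0$, so a nonzero rational is a square in $\QQ(\alpha)^{\times}$ exactly when it lies in $\QQ^{\times 2}\cup (t^{2}+1)\QQ^{\times 2}$. The hypothesis $ss'\notin\QQ^{2}$ rules out $s/s'\in\QQ^{\times 2}$, since $s/s'=ss'/(s')^{2}$, leaving $s/s'\in(t^{2}+1)\QQ^{\times 2}$. Multiplying through by $(s')^{2}$ rewrites this as $ss'\in(t^{2}+1)\QQ^{\times 2}$, equivalently $t^{2}+1\in ss'\QQ^{\times 2}$, which is the claimed equivalence; the converse direction follows by reversing the same rescaling and verifying that $s/s'$ then becomes a square in $\QQ(\alpha)$, so the two quadratic extensions of $\QQ(\alpha)$ coincide.

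The main obstacle is essentially just the bookkeeping of translating between ``$s/s'$ is a rational square up to a factor of $t^{2}+1$'' and the symmetric form ``$t^{2}+1\in ss'\QQ^{\times 2}$''. I am reading the notation $ss'\ZZ^{2}$ in the statement as the set of products of $ss'$ with a rational square, since for rational inputs the quotient $(t^{2}+1)/(ss')$ is a priori only rational; none of the substance of the argument depends on this distinction.
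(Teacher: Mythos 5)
Your argument is correct, and it pivots on the same fact as the paper's proof: $K_{s,t}=K_{s',t}$ comes down to whether $\sqrt{ss'}$ lands in the common quadratic subfield $\QQ(\sqrt{t^{2}+1})$. The execution is genuinely different, though. The paper takes the ratio of the two quartic generators to get $\sqrt{ss'}\in K_{s,t}$, notes that $\QQ(\sqrt{ss'})$ is then a quadratic subfield, and invokes the uniqueness of the quadratic subfield of a cyclic quartic field (together with Lemma \ref{th:004_00020}) to force $\QQ(\sqrt{ss'})=\QQ(\sqrt{t^{2}+1})$, treating the converse separately. You instead present both fields as quadratic extensions of $\QQ(\alpha)$ with radicands $s\alpha(\alpha+1)$ and $s'\alpha(\alpha+1)$, reduce everything by degree-$2$ Kummer theory to $s/s'\in\QQ(\alpha)^{\times 2}$, and classify the rational squares of $\QQ(\alpha)$ by hand as $\QQ^{\times 2}\cup(t^{2}+1)\QQ^{\times 2}$. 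What this buys you is a single biconditional chain with no separate converse and no appeal to cyclicity; what the paper's route buys is brevity, since Lemma \ref{th:004_00020} is already available. Two shared caveats, neither fatal: for rational $t$ one can have $t^{2}+1\in\QQ^{2}$, in which case $\QQ(\alpha)=\QQ$ and the forward direction of either argument needs a (trivial) separate check --- you at least flag this, the paper does not; and your reading of $ss'\ZZ^{2}$ as $ss'\QQ^{2}$ is exactly what the paper's own proof establishes, so that discrepancy lies in the statement of the lemma, not in your argument.
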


\begin{proof}
We first note that
\begin{align*}
  K_{s,t} &= \QQ\left(\!\sqrt{s(t^{2}+1)+s\sqrt{t^{2}+1}}\,\right), \\
  K_{s',t} &= \QQ\left(\!\sqrt{s'(t^{2}+1)+s'\sqrt{t^{2}+1}}\,\right). 
\end{align*}

Suppose that $K_{s,t}=K_{s',t}$. Then, we have $\sqrt{s'(t^{2}+1)+s'\sqrt{t^{2}+1}}\in K_{s,t}$. Thus, 
\[
   \frac{\sqrt{ss'}}{s} = \frac{\sqrt{s'}}{\sqrt{s}}
   = \frac{\sqrt{s'(t^{2}+1)+s'\sqrt{t^{2}+1}}}{\sqrt{s(t^{2}+1)+s\sqrt{t^{2}+1}}}\in K_{s,t}.
\]
Hence, $\QQ(\sqrt{ss'})$ is a subfield of $K_{s,t}$. 
From the assumption $ss'\not\in\QQ^{2}$ of the lemma, $\QQ(\sqrt{ss'})$ is a quadratic field. 
By the assumption $t^2 + 1\not\in\QQ^{2}$ and Lemma \ref{th:004_00020}, $K_{s,t}$ contains $\QQ(\sqrt{t^{2}+1})$. 
Since $K_{s,t}$ is cyclic, it has only one quadratic subfield. 
Therefore, we obtain $\QQ(\sqrt{ss'})=\QQ(\sqrt{t^{2}+1})$, which implies that $t^{2}+1\in ss'\QQ^{2}$. 

Conversely, suppose that $t^{2}+1\in ss'\QQ^{2}$. 
Since $t^2 + 1\not\in\QQ^{2}$, by Lemma \ref{th:004_00020}, 
we have $\sqrt{t^2 + 1}\in K_{s,t}$, and hence $\sqrt{ss'}\in K_{s,t}$. Then we observe that
\[
  \sqrt{\frac{s'}{s}}=\frac{\sqrt{ss'}}{s}\in K_{s,t}.
\]
It follows that
\[
  \sqrt{s'(t^{2}+1)+s'\sqrt{t^{2}+1}}
  = \sqrt{\frac{s'}{s}}\cdot\sqrt{s(t^{2}+1)+s\sqrt{t^{2}+1}}\in K_{s,t}, 
\]
which yields $K_{s',t}\subseteq K_{s,t}$. We can show the reverse inclusion in the same way. 
\end{proof}

\begin{remark} \label{th:004_00060}
For the case $s=s'$, it is clear that $K_{s,t}=K_{s',t}$ holds, but $ss'\not\in\QQ^{2}$ does not hold. 
\end{remark}

Now, by Proposition \ref{th:004_00040}, $K_{s,t}$ and $K_{2s,t}$ are cyclic quartic fields.
By Lemma \ref{th:004_00020}, both contain the quadratic field $\QQ(\sqrt{t^{2}+1})$.

\begin{lemma} \label{th:004_00070}
Let $t$ be an integer with $t\equiv 3$ or $5\pmod{8}$. Then, $t^{2}+1\not\in 2\QQ^{2}$.
\end{lemma}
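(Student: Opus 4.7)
The plan is to argue by contradiction using a congruence modulo $16$. Suppose there exists an integer $u$ with $t^{2}+1 = 2u^{2}$; I aim to derive a contradiction from the hypothesis $t \equiv 3$ or $5 \pmod{8}$.

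First I would compute $t^{2} \pmod{16}$ in each of the two residue classes. Writing $t = 8k+3$, the expansion $t^{2} = 64k^{2} + 48k + 9$ collapses to $t^{2} \equiv 9 \pmod{16}$ since both $64k^{2}$ and $48k$ are divisible by $16$. Similarly, $t = 8k+5$ yields $t^{2} = 64k^{2} + 80k + 25 \equiv 9 \pmod{16}$. In both cases I obtain
\[
  t^{2} + 1 \equiv 10 \pmod{16}.
\]

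Next, feeding this into the relation $2u^{2} = t^{2}+1$, I get $2u^{2} \equiv 10 \pmod{16}$, which is equivalent to $u^{2} \equiv 5 \pmod{8}$. Since the set of squares modulo $8$ is $\{0,1,4\}$, this congruence has no solution, and the assumption $t^{2}+1 \in 2\ZZ^{2}$ is impossible.

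There is no substantive obstacle here; the only point that requires a bit of care is the choice of modulus. Reducing modulo $8$ is insufficient because $t^{2}+1 \equiv 2 \pmod{8}$ is consistent with $2u^{2}$ for odd $u$, so one must go up to modulus $16$ (equivalently, look at $(t^{2}+1)/2$ modulo $8$) to rule out a solution.
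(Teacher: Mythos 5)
Your proof is correct and is essentially the paper's argument: the paper also reduces to showing $u^{2}=(t^{2}+1)/2\equiv 5\pmod{8}$ is impossible, just computing $(t^{2}+1)/2$ directly rather than passing through $t^{2}+1\pmod{16}$. The two presentations are equivalent, as you note yourself.
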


\begin{proof}
Let $t$ be an integer with $t\equiv 3$ or $5\pmod{8}$. 
Suppose that $t^{2}+1\in 2\QQ^{2}$.
Then there exists $q\in\QQ$ such that $t^{2}+1=2q^{2}$. 
Moreover, $q$ can be expressed as an irreducible fraction as follows:
\[
q=\frac{m}{n},\quad \gcd(m,n)=1,\quad n>0.
\]
Then we have
\[
t^{2}+1=2\cdot\left(\frac{m}{n}\right)^{2}.
\]
Multiplying both sides by $n^{2}$ and dividing by $2$, we obtain
\[
\frac{t^{2}+1}{2}\cdot n^{2}=m^{2}.
\]
Since $t$ is odd, $(t^{2}+1)/2$ on the left-hand side is an integer. 
If $n$ has a prime factor $p$, then $p^{2}\mid m^{2}$, implying $p\mid m$, which contradicts $\gcd(m,n)=1$. 
Hence $n=1$. Therefore,
\[
\frac{t^{2}+1}{2} = m^{2}.
\]
On the other hand, since $t\equiv 3$ or $5\pmod{8}$, we have $t^{2}\equiv 9\pmod{16}$. Thus,
\[
\frac{t^{2}+1}{2}\equiv \frac{10}{2}\equiv 5\pmod{8}.
\]
Hence,
\[
m^{2} \equiv 5\pmod{8}.
\]
However, the only possible values of a square of an integer modulo $8$ are $0$, $1$ and $4$, which is a contradiction.
Therefore, we must have $t^{2}+1\not\in 2\QQ^{2}$.
\end{proof}

\begin{proposition} \label{th:004_00080}
Let $s$ and $t$ be non-zero integers with $t\equiv 3$ or $5\pmod{8}$. Then, $K_{s,t}\neq K_{2s,t}$.
\end{proposition}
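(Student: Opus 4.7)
The plan is to combine Lemma \ref{th:004_00050} with $s' = 2s$ and Lemma \ref{th:004_00070} in a direct, almost one-line deduction.

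First I would check that Lemma \ref{th:004_00050} applies with $s' = 2s$. The hypothesis requires $ss' = 2s^{2} \not\in\QQ^{2}$, which holds because $2 \not\in \QQ^{2}$ (and $s$ is a nonzero rational). Under this hypothesis, the lemma states that $K_{s,t} = K_{2s,t}$ holds if and only if $t^{2}+1 \in ss'\ZZ^{2} = 2s^{2}\ZZ^{2}$.

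Next I would argue by contraposition. If $t^{2}+1 \in 2s^{2}\ZZ^{2}$, then there exists $n\in\ZZ$ with $t^{2}+1 = 2s^{2}n^{2} = 2(sn)^{2}$, so in particular $t^{2}+1 \in 2\ZZ^{2}$. But Lemma \ref{th:004_00070} guarantees $t^{2}+1 \not\in 2\ZZ^{2}$ whenever $t \equiv 3$ or $5 \pmod{8}$, a contradiction. Hence $t^{2}+1 \not\in 2s^{2}\ZZ^{2}$, and therefore $K_{s,t} \neq K_{2s,t}$.

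There is no real obstacle here; the statement is a clean corollary of the two preceding lemmas, and the only thing to verify carefully is that $2s^{2}\ZZ^{2} \subseteq 2\ZZ^{2}$ so that the obstruction supplied by Lemma \ref{th:004_00070} is strong enough to rule out membership in the smaller set $2s^{2}\ZZ^{2}$.
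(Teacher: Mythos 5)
Your proposal is correct and follows the same route as the paper: set $s'=2s$ in Lemma \ref{th:004_00050} and invoke Lemma \ref{th:004_00070} to rule out $t^{2}+1\in 2\ZZ^{2}$. If anything, your explicit note that one only needs the containment $2s^{2}\ZZ^{2}\subseteq 2\ZZ^{2}$ is slightly more careful than the paper's bald assertion that $2s^{2}\ZZ^{2}=2\ZZ^{2}$.
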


\begin{proof}
Putting $s'=2s$ gives 
\[
  ss'\QQ^{2} = 2s^{2}\QQ^{2} = 2\QQ^{2}.
\] 
By Lemma \ref{th:004_00070}, we have $t^{2}+1\not\in ss'\QQ^{2}$. 
Since $s$ and $t$ are non-zero integers, by Lemmas \ref{th:004_00030} and \ref{th:004_00050}, we obtain the desired result. 
\end{proof}

\section{Discriminants of cyclic quartic fields}

The following result concerning discriminants of cyclic quartic fields is extracted from Corollary 4 in Huard, Spearman and Williams \cite{HuardSpearmanWilliams1995}: 

\begin{theorem} \label{th:005_00010}
Let $a$, $b$ and $c$ be integers such that $c$ and $\gcd(a, b)$ are both square-free. 
Suppose that either of the following conditions (a) or (b) hold:
\begin{enumerate}[label=\rm(\alph*)]
  \item $a\equiv 4\pmod{8}$, $b\equiv 2\pmod{4}$ and $c\equiv 2\pmod{8}$. 
  \item $a\equiv 2\pmod{4}$, $b\equiv 1\pmod{2}$ and $c\equiv 2\pmod{8}$. 
\end{enumerate}
If $\QQ\left(\!\sqrt{a+b\sqrt{c}}\,\right)$ is a cyclic quartic field, then its discriminant is
\[
  2^{8}\cdot\frac{\gcd(a, b)^{2}\cdot c^{3}}{\gcd(a, b, c)^{2}}. 
\]
\end{theorem}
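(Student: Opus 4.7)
The plan is to reduce the statement to Corollary 4 of Huard, Spearman, and Williams \cite{HuardSpearmanWilliams1995}, which gives a complete discriminant formula for cyclic quartic fields of the shape $\QQ(\sqrt{a+b\sqrt{c}})$ organized by congruence conditions on the triple $(a,b,c)$ modulo small powers of $2$. The first step is essentially bookkeeping: identify which entries of their classification correspond to hypothesis (a) and hypothesis (b) here, and verify that in each matched entry the formula collapses to $2^{8}\gcd(a,b)^{2}c^{3}/\gcd(a,b,c)^{2}$.

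For a self-contained derivation, the approach I would take uses the conductor--discriminant formula together with a direct local analysis. Write $K=\QQ(\sqrt{a+b\sqrt{c}})$. Since $K/\QQ$ is abelian cyclic of order $4$ with unique quadratic subfield $F=\QQ(\sqrt{c})$, its character group consists of the trivial character, a quadratic character of conductor $|\disc(F)|$, and a complex-conjugate pair of order-$4$ characters with common conductor $\mathfrak{f}$. The conductor--discriminant formula then gives
\[
  |\disc(K)| \;=\; |\disc(F)|\cdot\mathfrak{f}^{2},
\]
and because $c \equiv 2\pmod{8}$ with $c$ square-free we have $\disc(F)=4c$. The problem therefore reduces to pinning down $\mathfrak{f}$.

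The conductor $\mathfrak{f}$ factors into local contributions at the primes ramifying in $K/F$. At odd primes, these are exactly the $p$ dividing either $c$ or $\gcd(a,b)$; inspecting the valuation of $a+b\sqrt{c}$ in the completion and reading off the local conductor of $F(\sqrt{a+b\sqrt{c}})/F$ is a short calculation that yields the odd part of $\mathfrak{f}$ as $\gcd(a,b)\, c/\gcd(a,b,c)$. The square-freeness of $c$ and of $\gcd(a,b)$ is used here to ensure each ramified prime contributes exactly once.

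The main obstacle is the prime $2$: the extension $K/F$ is wildly ramified there, and the exact $2$-part of $\mathfrak{f}$ depends sensitively on $(a,b,c)\pmod{8}$. Under either congruence package (a) or (b), what must be shown is that a careful Eisenstein-style computation with the relative minimal polynomial $X^{2}-(a+b\sqrt{c})$ over the $2$-adic completion of $F$ forces the $2$-part of $\mathfrak{f}$ to equal $2^{4}$. Once this is in hand, squaring and multiplying by $4c$ produces the stated formula. This final $2$-adic analysis is the heart of the argument and is exactly what the congruence hypotheses (a) and (b) are tuned to make tractable; in the present paper it is imported wholesale from \cite{HuardSpearmanWilliams1995}.
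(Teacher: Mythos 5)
Your primary route is exactly the paper's: Theorem \ref{th:005_00010} is stated without proof as an extraction from Corollary 4 of Huard--Spearman--Williams, so matching hypotheses (a) and (b) against the entries of their classification and reading off the formula is precisely what the paper does. The supplementary conductor--discriminant sketch is a sound outline and its arithmetic is consistent with the stated formula (with $\disc(\QQ(\sqrt{c}))=4c$ one needs the order-$4$ conductor to be $2^{3}\gcd(a,b)\,c/\gcd(a,b,c)$, which your local exponents give), but as you acknowledge, the decisive steps---the odd-prime conductor exponents and especially the wild $2$-adic computation---are not actually carried out, so it does not constitute an independent proof beyond the citation.
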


\begin{proposition} \label{th:005_00020}
Let $s$ and $t$ be integers such that $s$ is square-free, $t\equiv 1\pmod{2}$ and $\gcd(s, t^{2}+1)=1$. 
Then $K_{s,t}$, $K_{2s,t}$, $K_{-s,t}$ and $K_{-2s,t}$ have the same discriminant. 
Furthermore, the following conditions hold: 
\begin{enumerate}[label=\rm(\roman*)]
 \item The $2$-exponent of the discriminant is $11$.
 \item For any odd prime $p$, if $p\mid s$, then the $p$-exponent of the discriminant is $2$.
 \item For any odd prime $q$, if $q\nmid s$ and $q\nmid t^{2}+1$, then the $q$-exponent of the discriminant is $0$.
\end{enumerate}
\end{proposition}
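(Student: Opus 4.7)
My approach is to reduce each of the four fields $K_{s,t}$, $K_{2s,t}$, $K_{-s,t}$, $K_{-2s,t}$ to the normal form $\QQ(\sqrt{a+b\sqrt{c}})$ required by Theorem~\ref{th:005_00010} and verify that the resulting discriminant formula gives the same value in all four cases.

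Since $t$ is odd, $t^{2}+1 \equiv 2 \pmod{8}$. Writing $t^{2}+1 = c_{0} k^{2}$ with $c_{0}$ square-free gives $c_{0} = 2c_{1}$ with $c_{1}$ odd square-free and $k$ odd; reducing $t^{2}+1 \equiv 2 \pmod{8}$ shows $c_{1} \equiv 1 \pmod{4}$, hence $c_{0} \equiv 2 \pmod{8}$. The hypothesis $\gcd(s, t^{2}+1) = 1$ gives $\gcd(s, c_{0}) = \gcd(s, k) = 1$. Next, write $k = R^{2} k_{1}$ with $k_{1}$ square-free (automatically odd), so $\gcd(s, k_{1}) = 1$.

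For each $s' \in \{s, 2s, -s, -2s\}$, using $\sqrt{t^{2}+1} = k\sqrt{c_{0}}$ and dividing the radicand by $R^{2}$ (which preserves the generated field), I would rewrite
\[
  K_{s',t} = \QQ\!\left(\sqrt{s' c_{0} R^{2} k_{1}^{2} + s' k_{1} \sqrt{c_{0}}}\,\right).
\]
Setting $a = s' c_{0} R^{2} k_{1}^{2}$, $b = s' k_{1}$, $c = c_{0}$, I would check that $\gcd(a,b) = |s'|\,k_{1}$ is square-free (the integers $|s'| \in \{|s|, 2|s|\}$ and $k_{1}$ are square-free and coprime, since $k_{1}$ is odd and $\gcd(s, k_{1})=1$), that $c \equiv 2 \pmod{8}$, and that the congruence conditions of Theorem~\ref{th:005_00010} hold: case (b) when $s' = \pm s$ and case (a) when $s' = \pm 2s$. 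Letting $\delta = \gcd(k_{1}, c_{1})$, a short calculation gives $\gcd(a,b,c) = \delta$ when $s' = \pm s$ and $\gcd(a,b,c) = 2\delta$ when $s' = \pm 2s$. Substitution into the formula of Theorem~\ref{th:005_00010} then yields the common value
\[
  \disc(K_{s',t}) = \frac{2^{8} s^{2} k_{1}^{2} c_{0}^{3}}{\delta^{2}}
\]
in all four cases: the extra factor of $4$ in $\gcd(a,b)^{2}$ in the scaled cases is canceled by the extra factor of $4$ in $\gcd(a,b,c)^{2}$. Since $c_{0} = 2c_{1}$ with $c_{1}$, $s$, $k_{1}$, $\delta$ all odd, the $2$-exponent is $8+3=11$; for odd $p \mid s$, the $p$-exponent is $2v_{p}(s) = 2$; and for odd $q$ coprime to both $s$ and $t^{2}+1$, every factor in the numerator is coprime to $q$, giving exponent $0$.

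The main subtlety is the passage from $k$ to $k_{1}$. Applying Theorem~\ref{th:005_00010} requires $\gcd(a,b)$ to be square-free, but the naive representation with $c = c_{0}$ gives $\gcd(a,b) = |s'| k$, which fails when $k$ is not square-free. The division by $R^{2}$ absorbs this square factor via the identity $\QQ(\sqrt{\alpha}) = \QQ(\sqrt{\alpha/R^{2}})$, making the hypotheses of Theorem~\ref{th:005_00010} uniformly available, after which the four discriminants coincide by symmetric cancellation between the numerator and denominator of the formula.
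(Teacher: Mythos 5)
Your proposal is correct and follows essentially the same route as the paper: extract the square part of $t^{2}+1$ in two stages (your $c_{0}, k, R, k_{1}$ are the paper's $m, y, x, n$), rewrite each field in the normal form of Theorem~\ref{th:005_00010}, and observe that the extra factor of $2$ in $\gcd(a,b)$ for the scaled fields is cancelled by the extra factor of $2$ in $\gcd(a,b,c)$. The only cosmetic difference is that you treat all four signs uniformly while the paper does $K_{s,t}$, $K_{2s,t}$ and remarks that the negative cases are identical.
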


\begin{proof}
We will prove only for $K_{s,t}$ and $K_{2s,t}$, but it is exactly the same for $K_{-s,t}$ and $K_{-2s,t}$. We first obtain 
\begin{align*}
 K_{s,t} &= \QQ\left(\!\sqrt{s(t^{2}+1)+s\sqrt{t^{2}+1}}\,\right), \\
 K_{2s,t} &= \QQ\left(\!\sqrt{2s(t^{2}+1)+2s\sqrt{t^{2}+1}}\,\right).
\end{align*}
There are positive integers $y$ and $m$ such that
\[
 t^{2}+1 = y^{2}m,\quad \text{where $m$ is square-free}. 
\]
Also, there are positive integers $x$ and $n$ such that
\[
 y = x^{2}n,\quad \text{where $n$ is square-free}. 
\]
Thus,
\[
 t^{2}+1=x^{4}n^{2}m. 
\]
Therefore,
\begin{align*}
  & \sqrt{s(t^{2}+1)+s\sqrt{t^{2}+1}} = x\sqrt{sx^{2}n^{2}m + sn\sqrt{m}}, \\
  & \sqrt{2s(t^{2}+1)+2s\sqrt{t^{2}+1}} = x\sqrt{2sx^{2}n^{2}m + 2sn\sqrt{m}}.
\end{align*}
That is, 
\begin{align*}
 K_{s,t} &= \QQ\left(\!\sqrt{sx^{2}n^{2}m + sn\sqrt{m}}\,\right), \\
 K_{2s,t} &= \QQ\left(\!\sqrt{2sx^{2}n^{2}m + 2sn\sqrt{m}}\,\right).
\end{align*}
Since $t\equiv 1\pmod{2}$, we have $t^{2}+1\equiv 2\pmod{8}$, which implies that the $2$-exponent of $t^{2}+1$ is $1$. Hence,
\[
 x^{4}n^{2} = y^{2}\equiv 1\pmod{8},\quad m\equiv 2\pmod{8}.
\]
Therefore, $x$ and $n$ are odd, and $m$ is even. 

Putting $a=sx^{2}n^{2}m$, $b=sn$ and $c=m$ gives
\begin{align*}
 &\gcd(a, b) = sn, \\   
 &\gcd(a, b, c) = \gcd(\gcd(a, b), c) = \gcd(m, sn).
\end{align*}
By the assumption on $s$, we see that $\gcd(a, b)$ is square-free and $\gcd(m, sn)=\gcd(m, n)$. 
By Theorem \ref{th:005_00010}, the discriminant of $K_{s,t}$ is $2^{8}s^{2}n^{2}m^{3}/\gcd(m,n)^{2}$.

Putting $a=2sx^{2}n^{2}m$, $b=2sn$ and $c=m$ gives
\begin{align*}
 &\gcd(a, b) = 2sn, \\   
 &\gcd(a, b, c) = \gcd(\gcd(a, b), c) = \gcd(m, 2sn).
\end{align*}
By the assumption on $s$, we see that $\gcd(a, b)$ is square-free.
Since the $2$-exponent of $m$ is $1$, and $s$ and $n$ are odd, we observe that
\[
 \gcd(m, 2sn) = 2\cdot\gcd(m/2, sn) = 2\cdot\gcd(m, sn) = 2\cdot\gcd(m, n).
\]
By Theorem \ref{th:005_00010}, the discriminant of $K_{2s,t}$ is also $2^{8}s^{2}n^{2}m^{3}/\gcd(m,n)^{2}$.

Conditions (i), (ii) and (iii) are clear from the explicit form of the discriminant. 
In particular, for condition (iii), we note that if a prime $p$ divides $n^{2}m^{3}/\gcd(m,n)^{2}$, then $p$ divides $t^{2}+1$. 
\end{proof}

\begin{corollary} \label{th:005_00030}
There are infinitely many pairs of different real cyclic quartic fields with the same discriminant. The same holds for imaginary fields.
\end{corollary}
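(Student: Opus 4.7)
The plan is to apply Propositions \ref{th:004_00080} and \ref{th:005_00020} with $s$ fixed and $t$ varying. Set $s=1$ and let $t$ range over positive integers with $t\equiv 3\pmod{8}$. Then $s=1$ is square-free, $t$ is odd, and $\gcd(1,t^{2}+1)=1$ holds trivially, so Proposition \ref{th:005_00020} yields
\[
  \disc(K_{1,t}) = \disc(K_{2,t}) = \disc(K_{-1,t}) = \disc(K_{-2,t}).
\]
Since $K_{s,t}$ is totally real for $s>0$ and totally imaginary for $s<0$, the fields $K_{1,t}, K_{2,t}$ are real cyclic quartic fields and $K_{-1,t}, K_{-2,t}$ are imaginary cyclic quartic fields. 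Proposition \ref{th:004_00080}, applied once with $s=1$ and once with $s=-1$, gives $K_{1,t}\neq K_{2,t}$ and $K_{-1,t}\neq K_{-2,t}$. Thus for each admissible $t$ I obtain one real pair and one imaginary pair of distinct cyclic quartic fields with equal discriminant.

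To produce infinitely many such pairs, I would verify that as $t$ ranges over $\{3,11,19,\dots\}$ the fields $K_{1,t}$ are pairwise distinct; the same reasoning then handles $K_{-1,t}$. By Lemma \ref{th:004_00020}, $K_{1,t}$ contains the quadratic subfield $\QQ(\sqrt{t^{2}+1})$, which is determined by the square-free part of $t^{2}+1$. Since $t^{2}+1\to\infty$ and each positive square-free integer can occur as the square-free part of $t^{2}+1$ for only finitely many $t$ (Lemma \ref{th:004_00030} handles the case of a square, and in general the Pell-type equation $t^{2}+1=my^{2}$ has integer solutions $t$ growing with $y$), infinitely many distinct square-free parts arise. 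Hence infinitely many distinct quadratic subfields, and thus infinitely many distinct fields $K_{1,t}$, appear, yielding infinitely many distinct pairs $\{K_{1,t},K_{2,t}\}$. The imaginary case is identical, using $K_{-1,t}$ and $K_{-2,t}$.

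There is no serious obstacle beyond the preceding two propositions. The only care needed is to ensure that distinct values of $t$ produce genuinely distinct pairs of fields, rather than merely distinct parametrisations of the same pair, and the quadratic subfield observation above resolves this cleanly.
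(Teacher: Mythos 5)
Your construction of a single pair is fine: with $s=\pm1$ and $t\equiv 3\pmod 8$, Propositions \ref{th:004_00080} and \ref{th:005_00020} do give $K_{\pm1,t}\neq K_{\pm2,t}$ with equal discriminants. But your route to \emph{infinitely many} pairs rests on a false claim. You assert that each square-free integer $m$ can occur as the square-free part of $t^{2}+1$ for only finitely many $t$, citing that solutions of $t^{2}+1=my^{2}$ ``grow with $y$.'' Growth does not imply finiteness: this is the negative Pell equation $t^{2}-my^{2}=-1$, and whenever it has one solution it has infinitely many. For instance $m=10$ gives $t=3,\,117,\,4443,\dots$, and both $3$ and $4443$ are $\equiv 3\pmod 8$, so even within your congruence class a single quadratic subfield $\QQ(\sqrt{10})$ recurs infinitely often. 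The desired conclusion (infinitely many distinct square-free parts, hence infinitely many distinct quadratic subfields) is still true, but it needs a different justification --- e.g.\ a counting argument (for fixed $m$ the solutions $t\le X$ number only $O(\log X)$, so finitely many values of $m$ cannot account for all $t\equiv 3\pmod 8$ up to $X$), or more simply restricting to $t$ with $t^{2}+1$ square-free, whose infinitude is exactly Lemma \ref{th:006_00020}. As written, the step is a genuine gap.

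For comparison, the paper avoids this entirely by fixing $t$ and varying the \emph{prime} $p=s$: by Proposition \ref{th:005_00020}(ii) the $p$-exponent of $\disc(K_{p,t})$ is $2$, so distinct primes give pairs with distinct discriminants, and distinctness of the pairs is immediate. Your ``fix $s$, vary $t$'' parametrisation is a legitimate alternative family, but it shifts the burden onto showing the fields really vary with $t$, which is where the argument currently breaks.
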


\begin{proof}
Fix an integer $t$ satisfying $t\equiv 3$ or $t\equiv 5\pmod{8}$.
Let $p$ be an odd prime greater than $t^{2}+1$.
The quartic fields $K_{p,t}$ and $K_{2p,t}$ are both real by definition, and distinct by Proposition \ref{th:004_00080}. 
Since $p$ is square-free and $\gcd(p, t^{2}+1)=1$, it follows from Proposition \ref{th:005_00020} that $K_{p,t}$ and $K_{2p,t}$ have the same discriminant. 
For each $p$, the pair $(K_{p,t}, K_{2p,t})$ is distinct because the discriminant of $K_{p,t}$ is distinct. 

We can prove the same for pairs of imaginary cyclic quartic fields $K_{-p,t}$ and $K_{-2p,t}$. 
\end{proof}

\begin{remark}  \label{th:005_00040}
Fix an integer $t$ satisfying $t\equiv 1\pmod{2}$ and consider $K_{p,t}$ and $K_{-p,t}$ for odd primes $p$ greater than $t^{2}+1$. 
Then, it is clear that there are infinitely many pairs of real and imaginary cyclic quartic fields with the same discriminant.
\end{remark}

\section{The regulator of the real quadratic field $\QQ(\sqrt{t^{2}+1})$}

The following result is due to Nagell \cite{Nagell1922}: 

\begin{theorem} \label{th:006_00010}
Let $g(X)\in\ZZ[X]$ be a polynomial satisfying the following conditions:
\begin{enumerate}[label=\rm(C\arabic*)]
\item The degree of any irreducible polynomial dividing $g(X)$ is at most $2$. 
\item $g(X)$ has no multiple roots. 
\item $g(X)$ is primitive. 
\item For any prime number $p$, there is a positive integer $k_0$ such that $p^{2}\nmid g(k_0)$. 
\end{enumerate}
Then there are infinitely many positive integers $k$ such that $g(k)$ is square-free.
\end{theorem}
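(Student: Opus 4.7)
The plan is to run a M\"obius sieve on the level-set $\{k \leq N : g(k) \text{ is square-free}\}$. Writing $\mathbf{1}_{\text{sq-free}}(n) = \sum_{d^2 \mid n} \mu(d)$ and swapping sums, the count $S(N)$ of good $k \in [1,N]$ becomes
\[
  S(N) = \sum_{d \geq 1} \mu(d)\, N_g(d^2, N), \qquad N_g(m,N) := \#\{k \in [1,N] : m \mid g(k)\}.
\]
By the Chinese remainder theorem, $N_g(d^2,N) = \rho(d^2)\,N/d^2 + O(\rho(d^2))$ for squarefree $d$, where $\rho(d^2) = \#\{k \bmod d^2 : g(k) \equiv 0\}$ is multiplicative in $d$. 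The goal is to show that the formal main term $c N$ with $c := \prod_p (1 - \rho(p^2)/p^2)$ dominates and that $c > 0$, which forces $S(N) \to \infty$.

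Positivity of each Euler factor is exactly what (C4) delivers: for every $p$ there exists some $k_0$ with $p^2 \nmid g(k_0)$, hence $\rho(p^2) \leq p^2 - 1$ and $1 - \rho(p^2)/p^2 > 0$. To see that the infinite product $c$ itself is positive, I would decompose $g = \prod_i g_i$ into irreducibles; by (C1) each $\deg g_i \leq 2$, by (C2) each $g_i$ has nonzero discriminant, and by (C3) none of the $g_i$ vanishes identically mod any prime. For all but finitely many $p$, Hensel's lemma then lifts the simple roots of $g_i$ mod $p$ uniquely to roots mod $p^2$, giving $\rho_i(p^2) \leq \deg g_i \leq 2$ and hence $\rho(p^2) \leq \deg g$ for all sufficiently large $p$. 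Thus $\sum_p \rho(p^2)/p^2$ converges and $c > 0$.

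Next I would truncate the M\"obius sum at $d \leq D$ for some $D = N^\alpha$, $\alpha \in (0,1)$ to be tuned. For $d \leq D$, the total error from the $O(\rho(d^2))$ terms is bounded by $\sum_{d \leq D}\rho(d^2) \ll D^{1+\varepsilon}$, using $\rho(d^2) \leq (\deg g)^{\omega(d)} \ll d^\varepsilon$. For the tail $d > D$ I would bound $\#\{k \leq N : p^2 \mid g_i(k) \text{ for some } p > \sqrt{D}\}$ factor by factor. Primes $p$ with $p^2 > |g_i(N)|$ force $g_i(k)=0$, contributing only the finitely many integer zeros of $g_i$; in the range $\sqrt{D} < p \leq |g_i(N)|^{1/2} \leq C N$, the inequality $\rho_i(p^2) \leq 2$ yields
\[
  \sum_{\sqrt{D} < p \leq CN}\Bigl(\frac{2N}{p^2} + 2\Bigr) \;\ll\; \frac{N}{\sqrt{D}} + \pi(CN) \;=\; o(N).
\]
Putting the three pieces together gives $S(N) = cN + o(N) \to \infty$.

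The main obstacle is precisely this tail estimate: the $O(1)$ error in $N_g(p^2,N)$, summed over primes up to $|g_i(N)|^{1/2}$, yields a term of size $\pi(N^{\deg g_i /2})$, and this is $o(N)$ only because $\deg g_i \leq 2$. Condition (C1) is thus not a cosmetic hypothesis but the essential degree restriction that makes the elementary sieve close; for irreducible factors of degree $3$ or more this line of argument fails and substantially deeper methods (of Hooley, Granville, and others) are required.
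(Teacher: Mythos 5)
The paper does not actually prove this statement: it is quoted verbatim as a classical result of Nagell \cite{Nagell1922}, with no argument supplied. So there is nothing internal to compare against; your sketch is a reconstruction of the standard square-free sieve, and its main ingredients are the right ones — the M\"obius expansion with local densities $\rho(d^2)$, condition (C4) to make each Euler factor positive, Hensel's lemma plus (C2)/(C3) to get $\rho_i(p^2)\le\deg g_i$ for large $p$ (you should also invoke $p\nmid\mathrm{Res}(g_i,g_j)$ before concluding $\rho(p^2)\le\sum_i\rho_i(p^2)$), and the degree restriction (C1) to make the large-prime error $\pi(CN)=o(N)$. Your closing remark correctly identifies why (C1) is essential.

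There is, however, one genuine gap in the bookkeeping of the truncation. You split the M\"obius sum at $d\le D=N^{\alpha}$ and assert that the tail $d>D$ is controlled by counting $k$ with $p^{2}\mid g_i(k)$ for some prime $p>\sqrt{D}$. That identification fails: a square-free $d>D$ with $d^{2}\mid g(k)$ may be a product of many primes all below $\sqrt{D}$, so no single large prime square need divide $g(k)$. Nor can the tail be bounded crudely, since
\[
  \sum_{D<d\le CN^{\deg g/2}}\rho(d^2)\Bigl(\frac{N}{d^{2}}+1\Bigr)\;\gg\;N^{(\deg g/2)(1-\varepsilon)},
\]
which is not $o(N)$ once $\deg g\ge 3$ — and the theorem allows $g$ to be a product of several quadratics, so $\deg g=4$ is in scope. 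The standard repair is to reorganize the sieve: fix a large constant $z$, run inclusion--exclusion only over the finitely many square-free $d\mid\prod_{p\le z}p$ to count $\{k\le N:p^{2}\nmid g(k)\ \forall p\le z\}=\bigl(\prod_{p\le z}(1-\rho(p^2)/p^2)\bigr)N+O_z(1)$, and then subtract $\#\{k\le N:\exists\,p>z,\ p^{2}\mid g(k)\}$, which is exactly where your estimate $\sum_{z<p\le CN}(2N/p^{2}+2)\ll N/z+\pi(CN)$ applies. Choosing $z$ large but fixed gives $S(N)\gg N$. With that restructuring your argument closes; as written, the step ``tail $=$ large-prime squares'' is unjustified.
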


\begin{lemma} \label{th:006_00020}
There are infinitely many positive integers $t$ such that $t^{2}+1$ is square-free and $t\equiv 5\pmod{8}$.
\end{lemma}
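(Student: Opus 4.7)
The plan is to reduce the statement to Nagell's theorem (Theorem~\ref{th:006_00010}) applied to a suitable integral quadratic polynomial whose values parameterize $t^{2}+1$ for $t\equiv 5\pmod{8}$.

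Concretely, I would write $t=8k+5$ for positive integers $k$, so that
\[
  t^{2}+1 = (8k+5)^{2}+1 = 2\,(32k^{2}+40k+13).
\]
Setting $g(X)=32X^{2}+40X+13\in\ZZ[X]$, one observes that $g(k)$ is always odd, since its constant term is odd and the remaining terms are even. Consequently, for these values of $t$, the integer $t^{2}+1=2\,g(k)$ is square-free if and only if $g(k)$ is square-free. It thus suffices to exhibit infinitely many positive integers $k$ for which $g(k)$ is square-free, and this I would obtain by applying Theorem~\ref{th:006_00010} to $g$.

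Verifying the four hypotheses of Nagell's theorem for $g$ is then routine. Conditions (C1) and (C2) are immediate: $g$ has degree $2$ with discriminant $40^{2}-4\cdot 32\cdot 13=-64\neq 0$. For (C3), one computes $\gcd(32,40,13)=1$. The substantive point is (C4). For $p=2$, any positive $k_{0}$ works because $g(k_{0})$ is odd and so cannot be divisible by $4$. For an odd prime $p$, a standard finite-differences argument shows that if $p^{2}\mid g(k)$ held for every integer $k$, then $p^{2}$ would have to divide each coefficient of $g$ separately; but none of $32$, $40$, $13$ is divisible by an odd prime square, so some positive $k_{0}$ with $p^{2}\nmid g(k_{0})$ certainly exists.

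The only subtle point, which also constitutes the main (if minor) obstacle, is that one cannot invoke Nagell's theorem directly on the natural polynomial $(8X+5)^{2}+1$, because its coefficients $64$, $80$, $26$ have common factor $2$ and so (C3) fails. The two-step reduction via the primitive polynomial $g$, combined with the fact that $g(k)$ is always odd so that square-freeness of $g(k)$ transfers to square-freeness of $2g(k)=t^{2}+1$, is precisely what circumvents this. Once that bookkeeping is arranged, the conclusion follows at once from Theorem~\ref{th:006_00010}.
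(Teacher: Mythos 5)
Your proposal is correct and follows essentially the same route as the paper: substitute $t=8k+5$, factor out the $2$ to get the primitive polynomial $g(X)=32X^{2}+40X+13$, apply Nagell's theorem to $g$, and transfer square-freeness back to $2g(k)$ using that $g(k)$ is odd. The only cosmetic difference is in verifying (C4): the paper simply notes $g(1)=85=5\cdot 17$ is square-free (so $k_{0}=1$ works for every prime), whereas you use a finite-differences argument for odd $p$; both are valid.
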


\begin{proof}
Let $f(X) = (8X+5)^2 + 1$. 
It suffices to show that there are infinitely many positive integers $k$ such that $f(k)$ is square-free. 
Expanding the right-hand side of the defining equation of $f(X)$, we have
\[
  f(X) = 64X^2 + 80X + 26 = 2(32X^2 + 40X + 13). 
\]
Let $g(X)=32X^2 + 40X + 13$. 
It is clear that $g(X)$ satisfies conditions (C1), (C2), and (C3) of Theorem \ref{th:006_00010}. 
Since $g(1)=85=5\cdot 17$, condition (C4) is also satisfied. Therefore, there are infinitely many positive integers $k$ such that $g(k)$ is square-free. 
On the other hand, for each integer $k$, since $g(k)$ is odd, if $g(k)$ is square-free, then $f(k)=2g(k)$ is also square-free. 
Hence, there are infinitely many positive integers $k$ such that $f(k)$ is square-free. 
\end{proof}

\begin{lemma} \label{th:006_00030}
Let $t$ be an odd integer such that $t^{2}+1$ is square-free. 
Then $t+\sqrt{t^{2}+1}$ is a fundamental unit of the quadratic field $\QQ(\sqrt{t^{2}+1})$. 
\end{lemma}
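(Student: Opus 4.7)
My plan is to prove this by the standard minimality argument: express $t+\sqrt{m}$ (where $m=t^{2}+1$) as a power of the fundamental unit and use a coefficient comparison to force the exponent to be $1$.

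First I would reduce to the case $t>0$. Since $(-t)^{2}+1=t^{2}+1$ and $t\neq 0$ (as $t$ is odd), we may replace $t$ by $|t|$, so assume $t>0$; then $t+\sqrt{m}>1$. Because $t$ is odd we have $t^{2}\equiv 1\pmod{8}$, so $m\equiv 2\pmod{4}$. Combined with the assumption that $m$ is square-free, this gives that the ring of integers of $\QQ(\sqrt{m})$ is exactly $\ZZ[\sqrt{m}]$. The element $t+\sqrt{m}$ has norm $t^{2}-m=-1$, hence is a unit.

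Next let $\varepsilon=u+v\sqrt{m}$ be the fundamental unit of $\QQ(\sqrt{m})$ satisfying $\varepsilon>1$. Its conjugate $\bar{\varepsilon}=u-v\sqrt{m}=\pm\varepsilon^{-1}$ has absolute value less than $1$; adding and subtracting the inequalities $\varepsilon>1$ and $|\bar{\varepsilon}|<1$ shows $u,v\geq 1$. Since $t+\sqrt{m}$ is a unit greater than $1$, there exists a positive integer $k$ with $t+\sqrt{m}=\varepsilon^{k}$.

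The key step is the coefficient comparison. Writing $\varepsilon^{k}=A_{k}+B_{k}\sqrt{m}$, the recursion
\[
  A_{k+1}=uA_{k}+mvB_{k},\qquad B_{k+1}=vA_{k}+uB_{k}
\]
together with $A_{1}=u\geq 1$ and $B_{1}=v\geq 1$ yields by induction $A_{k}\geq 1$ and $B_{k}\geq 1$ for all $k\geq 1$; in particular $B_{k+1}=vA_{k}+uB_{k}\geq 1+B_{k}>B_{k}$, so the sequence $(B_{k})$ is strictly increasing. The $\sqrt{m}$-coefficient of $t+\sqrt{m}=\varepsilon^{k}$ being equal to $1$ then forces $k=1$ and $v=1$, and matching the rational parts gives $u=t$. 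Hence $\varepsilon=t+\sqrt{t^{2}+1}$.

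I do not anticipate a serious obstacle; this is the classical minimality argument for fundamental units of real quadratic fields. The role of the square-free hypothesis on $t^{2}+1$ is precisely to ensure that $\ZZ[\sqrt{m}]$ is the full ring of integers, so that no candidate smaller unit is overlooked. The only points requiring care are the sign reduction at the start and the verification that $u,v\geq 1$, both of which are straightforward.
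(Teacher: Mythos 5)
Your proof is correct and follows essentially the same route as the paper: the paper simply observes that $(t,1)$ is the minimal positive solution of $X^{2}-(t^{2}+1)Y^{2}=-1$ and that $t^{2}+1\equiv 2\pmod{4}$ (so the ring of integers is $\ZZ[\sqrt{t^{2}+1}]$), citing the standard Pell-equation fact, while you write out the standard minimality argument in full. All the details you supply (the reduction to $t>0$, the bound $u,v\geq 1$, and the strictly increasing coefficients $B_{k}$) are accurate.
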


This classical result can be found in Degert \cite[p.~6]{Degert1958} or Hasse \cite[p.~49]{Hasse1965};
we include a proof for the reader's convenience. 

\begin{proof}
Let $t$ be an odd integer such that $t^{2}+1$ is square-free. 
The minimal positive integer solution of the Pell equation $X^{2}-(t^{2}+1)Y^{2} = -1$ is $(X, Y)=(t, 1)$, and $t^{2}+1\equiv 2\pmod{4}$. 
Therefore, $t+\sqrt{t^{2}+1}$ is a fundamental unit of the quadratic field $\QQ(\sqrt{t^{2}+1})$. 
\end{proof}

\begin{proposition} \label{th:006_00040}
For any positive real number $M$, there is a positive integer $t$ such that 
$t\equiv 5\pmod{8}$ and the regulator of the real quadratic field $\QQ(\sqrt{t^{2}+1})$ is greater than $M$. 
\end{proposition}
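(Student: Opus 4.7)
The proposition asks for arbitrarily large regulators of $\QQ(\sqrt{t^2+1})$ under the congruence condition $t\equiv 5\pmod 8$. The plan is to combine the two preceding lemmas in Section 6 with the elementary observation that the fundamental unit $t+\sqrt{t^2+1}$ grows without bound as $t\to\infty$.

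More precisely, I would proceed as follows. By Lemma \ref{th:006_00020}, the set $S$ of positive integers $t$ with $t\equiv 5\pmod 8$ and $t^2+1$ square-free is infinite, and hence unbounded. Given $M>0$, choose $t\in S$ large enough that $t>e^M$; this is possible since $S$ is unbounded. Such a $t$ is odd, so by Lemma \ref{th:006_00030} the element $\varepsilon = t + \sqrt{t^2+1}$ is a fundamental unit of $\QQ(\sqrt{t^2+1})$. Therefore
\[
  \reg\bigl(\QQ(\sqrt{t^2+1})\bigr) = \log\bigl(t+\sqrt{t^2+1}\bigr) > \log t > M,
\]
which gives the desired conclusion.

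There is essentially no obstacle here: the heavy lifting is already done by Lemma \ref{th:006_00020} (which invokes Nagell's theorem) and Lemma \ref{th:006_00030} (which uses the Pell equation characterization of the fundamental unit). The only things to check are that membership in $S$ is preserved under taking $t$ arbitrarily large (immediate from infiniteness) and that $\log(t+\sqrt{t^2+1})\to\infty$ as $t\to\infty$ (immediate since $t+\sqrt{t^2+1}>t$). Thus the entire argument is a short two-step citation of the previous lemmas combined with monotonicity of the logarithm.
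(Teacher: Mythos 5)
Your argument is exactly the paper's proof: use Lemma \ref{th:006_00020} to pick $t\equiv 5\pmod 8$ with $t^2+1$ square-free and $t>e^M$, then apply Lemma \ref{th:006_00030} and the bound $\log(t+\sqrt{t^2+1})>\log t>M$. Correct and the same approach.
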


\begin{proof}
Let $M$ be a positive real number. By Lemma \ref{th:006_00020}, 
there is a positive integer $t$ such that $t^{2}+1$ is square-free, $t\equiv 5\pmod{8}$, and $t>e^{M}$. 
By Lemma \ref{th:006_00030}, we have
\[
  \reg\left(\QQ(\sqrt{t^{2}+1})\right) = \log(t+\sqrt{t^{2}+1}) > \log t > M. 
\]
\end{proof}

\begin{remark} \label{th:006_00050}
An analogous result holds if we replace the condition $t \equiv 5 \pmod{8}$ with $t \equiv 3 \pmod{8}$ in Proposition \ref{th:006_00040}.
This follows by replacing Lemma \ref{th:006_00020} with its analogue for $t \equiv 3 \pmod{8}$.
\end{remark}

\section{Proofs of the main theorems and examples} \label{sec:007_00010}

\begin{proof}[Proof of Theorem \ref{th:001_00020}]
Let $M$ be a positive real number. By Proposition \ref{th:006_00040}, 
there is a positive integer $t$ such that $t\equiv 5\pmod{8}$ and the regulator $R_{0}$ of the real quadratic field $\QQ(\sqrt{t^{2}+1})$ is greater than $M$. 
Furthermore, by Dirichlet's theorem on arithmetic progressions, 
there are infinitely many primes $p$ satisfying $p>t^{2}+1$ (hence in particular, $\gcd(p, t^{2}+1)=1$) and $p\equiv 1\pmod{4}$. 
For each such prime $p$, the biquadratic fields $B_{-p,t^{2}+1}$ are distinct because different values of $p$ yield different combinations of three quadratic subfields. 
By the discussion in Section \ref{sec:003_00010}, 
these biquadratic fields have the same maximal real subfield $\QQ(\sqrt{t^{2}+1})$ and hence the same regulator $R=2R_{0}$. This proves (i).

On the other hand, again by the discussion in Section \ref{sec:003_00010}, 
$B_{-p,t^{2}+1}$ and $B_{-2p,t^{2}+1}$ are distinct biquadratic fields having the same discriminant and the same regulator $R$. This proves (ii).
\end{proof}

\begin{proof}[Proof of Theorem \ref{th:001_00030}]
Let $M$ be a positive real number. By Proposition \ref{th:006_00040}, 
there is a positive integer $t$ such that $t\equiv 5\pmod{8}$ and the regulator $R_{0}$ of the real quadratic field $\QQ(\sqrt{t^{2}+1})$ is greater than $M$. 
For each odd prime $p$ satisfying $p>t^{2}+1$, the cyclic quartic fields $K_{-p,t}$ are distinct because their discriminants are distinct by Proposition \ref{th:005_00020}. 
It follows from Lemma \ref{th:004_00020} that all such fields $K_{-p,t}$ have the same maximal real subfield $\QQ(\sqrt{t^{2}+1})$. 
Since all of them are CM-fields, by Theorem \ref{th:002_00010}, they have the same regulator $R=2R_{0}$. This proves (i).

Now, $K_{-p,t}$ and $K_{-2p,t}$ are cyclic quartic fields by Proposition \ref{th:004_00040}, and they are clearly imaginary. 
By Proposition \ref{th:004_00080}, $K_{-p,t}\neq K_{-2p,t}$. 
The prime $p$ is square-free and satisfies $\gcd(p, t^{2}+1)=1$. 
By Proposition \ref{th:005_00020}, $K_{-p,t}$ and $K_{-2p,t}$ have the same discriminant. 
By Lemma \ref{th:004_00020}, they have the same maximal real subfield. 
Since both of them are CM-fields, by Theorem \ref{th:002_00010}, they have the same regulator $R$. This proves (ii). 
\end{proof}

Finally, we give examples of pairs of distinct imaginary biquadratic fields and pairs of distinct cyclic quartic fields having the same discriminant, regulator and class number. 
For the computation of fundamental units, we use Lemma \ref{th:006_00030}. 
For the computation of class numbers, we use Magma \cite{BosmaCannonPlayoust1997}.

\begin{example} \label{th:007_00010}
We consider the pair of the biquadratic fields $B_{-21, 10}$ and $B_{-42, 10}$. 
They share the maximal real subfield $\QQ(\sqrt{10})$, whose fundamental unit is $\varepsilon = 3 + \sqrt{10}$. 
Moreover, they have the same discriminant $2^{8}\cdot 3^{2}\cdot 5^{2}\cdot 7^{2}$, regulator $2\cdot\log\lvert\varepsilon\rvert = 3.6368929\cdots$ and class number $32$. 
\end{example}

\begin{example} \label{th:007_00020}
We consider the pair of the cyclic quartic fields $K_{-3,35}$ and $K_{-6,35}$. 
They share the maximal real subfield $\QQ(\sqrt{2\cdot 613})$, whose fundamental unit is $\varepsilon = 35 + \sqrt{2\cdot 613}$. 
Moreover, they have the same discriminant $2^{11}\cdot 3^{2}\cdot 613^{3}$, regulator $2\cdot\log\lvert\varepsilon\rvert = 8.4973985\cdots$ and class number $2^{3}\cdot 5^{2}\cdot 97$. 
\end{example}

\begin{remark} \label{th:007_00030}
The class number of a CM-field is divisible by the class number of its maximal real subfield. 
\end{remark}

\begin{remark}\label{th:007_00040}
For an odd integer $t$ greater than $1$, if $t^{2}+1\not\in 2\QQ^{2}$, the discriminant of the real quadratic field $\QQ(\sqrt{t^{2}+1})$ has more than one prime factor, and hence by genus theory the narrow class number is even. Since the fundamental unit $t+\sqrt{t^{2}+1}$ of $\QQ(\sqrt{t^{2}+1})$ has norm $-1$, the narrow and usual class numbers coincide, and therefore the class number is even. 
\end{remark}

\section*{Acknowledgements}

The authors would like to thank Professor Shin Nakano for his valuable comments and the anonymous referee for careful reading and helpful suggestions.

\end{document}